\documentclass[11pt, final]{article}
\usepackage[a4paper, margin = 1in]{geometry}

\usepackage{amsmath, amssymb, amsthm, amsbsy}
\usepackage{mathtools, bbm, dsfont}
\usepackage{enumitem}
\usepackage{microtype}
\usepackage[indent]{parskip}
\usepackage{graphicx} 
\usepackage{xcolor}
\usepackage{mathrsfs}
\usepackage{tikz,caption, subcaption}
\usetikzlibrary{shapes,arrows,math, calc, positioning, fit, shapes.misc, hobby}

\usepackage[pdftex,colorlinks,citecolor=blue]{hyperref}
\usepackage{cleveref}
\usepackage[numbers,sort&compress]{natbib}
\usepackage[notcite,notref]{showkeys}
\def\mcite[#1]#2{\mbox{\cite[#1]{#2}}} 

\newtheorem{theorem}{Theorem}[section]
\newtheorem{lemma}[theorem]{Lemma}

\theoremstyle{definition}
\newtheorem{claim}{Claim}[theorem]

\newtheorem*{claim*}{Claim}
\newtheorem*{remark*}{Remark}

\newcommand{\vv}{\textbf{v}}

\newcommand{\oldqed}{}
\def\endofClaim{\hfill\scalebox{.6}{$\Box$}}
\newenvironment{claimproof}[1][Proof]{
  \renewcommand{\oldqed}{\qedsymbol}
  \renewcommand{\qedsymbol}{\endofClaim}
  \begin{proof}[#1]}
  {\end{proof}
  \renewcommand{\qedsymbol}{\oldqed}}

\definecolor{andrey}{rgb}{0.3, 0.65,,0.2}

\title{Range of Clique Counts in Graphs}
\author{%
Mihir Neve\thanks{Department of Mathematics, The London School of
Economics, UK. E-mail: {\tt  m.s.neve@lse.ac.uk}.}
\and
Alexey Pokrovskiy\thanks{Department of Mathematics, University College London, UK. Email: {\tt a.pokrovskiy@ucl.ac.uk}.}
\and
Andrey Shapiro \thanks{Department of Mathematics, King’s College London, UK. E-mail: {\tt andrey.shapiro@kcl.ac.uk}.}
}

\date{}

\begin{document}

\maketitle

\begin{abstract}
    Let $\gamma(G)$ denote the number of cliques in a graph $G$ and let $\Gamma(n)\coloneqq \{\gamma(G):|V(G)|=n\}$ be the set of values of $\gamma(G)$ that can be attained on $n$ vertices. We improve on a result by Erd\H{o}s and Ern\'e to show that $| \Gamma(n)| \geq 2^{n-4\ln(2)\log^3(n)}$ for sufficiently large $n$.
\end{abstract}

\section{Introduction}

Enumeration of graph substructures has been a fundamental problem of structural and algorithmic interest in combinatorics. An early result in this area includes, for instance, Cayley's theorem~\cite{borchardt1860ueber, cayley1889theorem}, which establishes that the number of spanning trees in the $n$-vertex complete graph~$K_n$ is exactly~$n^{n-2}$. This result was later generalised to arbitrary graphs by Kirchhoff~\cite{kirchhoff1847ueber}, who linked spanning tree counts to the determinant of a specific symmetric matrix derived from the underlying graph structure. 

Beyond spanning trees, the enumeration of  homogeneous substructures, such as cliques and independent sets, has also been intensively studied. Recall that a \emph{clique} in a graph~$G$ is a (not necessarily maximal) subset of vertices~$S \subseteq V(G)$ that induces a complete subgraph~$K_{|S|}$ in~$G$. Notably, the enumeration of several combinatorial structures can be reduced to counting cliques, or equivalently independent sets, in some aptly constructed auxiliary graph. For instance, independent sets in Cayley graphs have been used to enumerate sum-free sets in Abelian groups (see, for example, \cite{sum_free_sets_Alon}). Similarly, Kleitman and Winston~\cite{C_4free, Lattices} estimated the number of lattices and $C_4$-free graphs by estimating the number of independent sets in some auxiliary graphs. In fact, their algorithmic approach gave rise to the so-called container method, which has since become a powerful tool for estimating the number of cliques and independent sets in a wide class of graphs (see, for example,~\cite{container_ind_sets} for a comprehensive survey).  

Several variations of the clique counting problem have been explored in the literature. 
As a generalisation to Tur\'{a}n's theorem, Zykov~\cite{zykov} determined the maximum number of $k$-cliques in an $n$-vertex graph having no $\ell$-cliques for all $k < \ell$, which has since been generalised in several directions (see, for example, \cite{Eckhoff, erdos_circuit, Fisher_Ryan}). More recently, Wood~\cite{wood_cliques} established tight bounds on the maximum number of cliques in an $n$-vertex graph with $m$~edges, further extending these bounds to the graph classes of planar graphs, graphs with bounded degree, and graphs with bounded degeneracy. Focusing on maximal cliques, Moon and Moser~\cite{moon_moser} determined the maximum number of maximal cliques in an $n$-vertex graph, and further characterised the extremal graphs. They also estimated the number of different sizes of maximal cliques achievable by an $n$-vertex graph. Their bounds were subsequently improved by Erd\H{o}s~\cite{Erdos_cliques} and Spencer~\cite{Spencer_cliques}. 

In this paper, we study the range of values that can be attained by the clique counts of $n$-vertex graphs. More concretely, let~$\gamma(G)$ denote the number of cliques in a graph~$G$, and let~$\Gamma(n) \coloneqq \{\gamma(G) : |V(G)| = n\}$ be the set of possible clique counts achievable by an $n$-vertex graph. Note that for any $n$-vertex graph~$G$, the clique count~$\gamma(G)$ trivially satisfies~$n+1 \leq \gamma(G) \leq 2^n$, where the lower and upper bounds are uniquely attained by the empty graph and the complete graph, respectively.

Erd\H{o}s and Ern\'{e}~\cite{erdos_erne_orig} were interested in the asymptotic behaviour of the size of~$\Gamma(n)$. They established the following bounds. Note that unless specified otherwise, all logarithms throughout this paper are taken to base~$2$.

\begin{theorem}[\cite{erdos_erne_orig}]
\label{thm:erdos_erne}
    For sufficiently large $n$, we have $2^{n-6n^{5/6}}\leq |\Gamma(n)| = o(2^{n - 0.4\log n})$.
\end{theorem}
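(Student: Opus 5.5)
\emph{Lower bound.} I would realise many clique counts by attaching a small gadget to a large clique. Let $K$ be a clique on $n-k$ vertices with vertices ordered $u_1,\dots,u_{n-k}$, and let $T$ be the remaining $k$ vertices. Suppose $T$ is split into groups $T_1,\dots,T_r$, where every vertex of $T_j$ is adjacent exactly to $u_1,\dots,u_{d_j}$, and there are no edges between different groups. Then every clique either lies in $K$ or is a nonempty clique $C$ of some $G[T_j]$ together with a subset of $\{u_1,\dots,u_{d_j}\}$. Hence
\[
\gamma(G)=2^{n-k}+\sum_{j=1}^{r}\bigl(\gamma(G[T_j])-1\bigr)2^{d_j}.
\]
I take $d_j=jb$ for a block length $b$, so the blocks cover all $n-k$ binary digits below the leading one. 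The task is then to make each coefficient $\gamma(G[T_j])-1$ cover all residues modulo $2^{b}$. Any larger coefficient only creates carries, and these can be absorbed by the next block, since we only need distinctness and a large count of values, not every integer. Each $G[T_j]$ has $t=|T_j|$ vertices and needs at least about $2^b$ attainable values of $\gamma$. Taking $t\approx b$ plus a small slack, this follows by a crude recursive or induction argument, or even from a weak explicit family such as disjoint unions of cliques, whose clique counts $1+\sum_i(2^{a_i}-1)$ already hit many residues. Optimising $b$, $t$ and $r\approx(n-k)/b$ so that $k=rt$ stays of order $n^{5/6}$ gives at least $2^{n-k-O(\text{loss})}\ge 2^{n-6n^{5/6}}$ distinct values. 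The parameter bookkeeping here is routine.

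\emph{Upper bound.} Values below $2^{n}n^{-0.4}$ number at most $2^{n-0.4\log n}$ trivially. To obtain the $o(\cdot)$ I would cut instead at $2^n/(n^{0.4}\omega(n))$ for a slowly growing $\omega$. For larger values I use structure. If the complement $\overline G$ has a matching of size $m$, then the cliques of $G$ can contain at most three of the four subsets of each matched pair, so $\gamma(G)\le 2^{n-2m}3^m=2^n(3/4)^m$. Thus a large $\gamma(G)$ forces the matching number of $\overline G$ to satisfy $m\le c\log n$ with $c=0.4/\log(4/3)+o(1)$. The vertex set $S$ of a maximum matching (so $|S|\le 2m$) then covers all non-edges, and $V\setminus S$ is a clique. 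Consequently
\[
\gamma(G)=\sum_{C\text{ clique of }G[S]}2^{|N(C)\setminus S|},
\]
so $\gamma(G)$ depends only on $G[S]$ and on the multiset of neighbourhood types in $S$ of the outside vertices.

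The main obstacle is counting the distinct values of this expression sharply enough: a naive count of configurations, about $n^{2^{2m}}$, is far too large when $m\sim\log n$. My plan is to stratify by $m$. A value attained with matching number $m$ lies in an interval of length at most about $2^n(3/4)^m$, and the number of such values is at most the minimum of that length and the configuration count. Choosing a threshold $m_0$ of order $\log\log n$, the strata with $m\le m_0$ contribute only polynomially many, $n^{2^{O(m_0)}}$, values, which is $o(2^{n-0.4\log n})$. For $m>m_0$ I would bound the number of values more cleverly, by using that the leading terms $2^{n-|S|+\dots}$ pin down the high-order bits, so the free part of the value ranges over an interval of size about $2^n(3/4)^m$. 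This interval is below $2^{n-0.4\log n}$ only once $m\gtrsim 0.96\log n$, so the intermediate range of $m$ is exactly where the careful argument is needed. I expect this balancing to be the crux.
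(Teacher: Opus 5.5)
Each half of your proposal has a genuine gap. The paper does not prove this theorem itself; it cites Erd\H{o}s and Ern\'e and only sketches their method.

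\textbf{Lower bound.} No choice of parameters makes your construction work. All vertices of $T_j$ have the same neighbourhood $\{u_1,\dots,u_{d_j}\}$ in $K$, and there are no edges between groups. So $\gamma(G)$ is determined by the tuple $\bigl(\gamma(G[T_1]),\dots,\gamma(G[T_r])\bigr)$, and it takes at most $\prod_j 2^{|T_j|}=2^k$ values. Since $d_r=rb\le n-k$, every value is also $O(2^{n-k+t})$. Hence you get at most $\min\{2^k,O(2^{n-k+t})\}=O(2^{(n+t)/2})$ distinct counts. Reaching $2^{n-6n^{5/6}}$ would force a single gadget on $n-O(n^{5/6})$ vertices, which is the original problem again.

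Your bookkeeping hides this. Needing $2^b$ residues forces $t\ge b$, and $r\approx(n-k)/b$ blocks then give $k=rt\ge rb\approx n-k$. So $k\gtrsim n/2$, not $O(n^{5/6})$. The gadget claim also fails on its own: disjoint unions of cliques on $t$ vertices realise at most as many values as there are partitions of $t$, i.e.\ $2^{O(\sqrt t)}$. A ``recursive'' argument for the gadgets is just the theorem at a smaller scale.

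The missing idea is that the information must sit in the edges from a few extra vertices into the big clique. Different cliques among the extra vertices must see common neighbourhoods of different, independently controlled sizes, so that $s$ extra vertices encode far more than $s$ bits. Lemma~\ref{q2} is an instance of this. There, $2\ell$ satellites form a complete $\ell$-partite graph, and its $2^\ell$ full transversals get distinct exponents via the sets $B_p$. The adjacencies into $D$ then encode the $2^\ell$ bits of $x-1$.

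\textbf{Upper bound.} Your reduction is correct: $\gamma(G)\le 2^n(3/4)^m$, $V\setminus S$ is a clique, and your formula for $\gamma(G)$ holds. But, as you acknowledge, it stops exactly where the content lies. Even with the threshold optimised, the configuration bound (about $\binom{n+2^{2m}}{n}2^{\binom{2m}{2}}$ values) is useful only while $2^{2m}\le cn$ for a small constant $c$, i.e.\ for $m\lesssim\frac12\log n$. The interval bound helps only once $m\gtrsim 0.96\log n$. For $m$ in between, all you can say is that the values lie below $2^n(3/4)^{\frac12\log n}=2^{n-\frac12\log(4/3)\log n}$.

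Your argument therefore yields only $|\Gamma(n)|\le 2^{n-(0.2075-o(1))\log n}$. That is essentially the weaker Kov\'acs--Nagy bound quoted in the introduction, not $o(2^{n-0.4\log n})$. The remark that leading terms `pin down the high-order bits' does not close this range. In that range $2^{|S|}$ exceeds $n$, and nothing you have said prevents the up to $2^{|S|}$ summands $2^{|N(C)\setminus S|}$ from filling the interval. What is needed is a genuine sparsity statement for large clique counts, together with a way to use it across all scales. Per the paper's summary, Erd\H{o}s and Ern\'e show that counts close to $2^n$ have binary expansions with boundedly many ones.
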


The lower bound in Theorem~\ref{thm:erdos_erne} was proved by constructing a family of $n$-vertex graphs having pairwise distinct clique counts. In fact, they prove a stronger structural result by showing that~$\Gamma(n)$ contains a contiguous set of integers ranging from $n+1$ up to this exponential lower bound. In contrast, to derive the upper bound, they observe that for graphs where~$\gamma(G)$ is close to $2^n$, the achievable clique counts exhibit discrete gaps. Furthermore, they characterise these sparse upper counts by proving that their binary expansions must contain a bounded number of ones. Factoring this characterisation into the enumeration of possible clique counts yields the upper bound in Theorem~\ref{thm:erdos_erne}.

Recently,  Kov\'{a}cs and Nagy~\cite{count_ind_sets} rediscovered this problem in the guise of counting independent sets --- compared to~\cite{erdos_erne_orig},  they proved  a better lower bound~$\log(|\Gamma(n)|) \geq n - 2^{(1+o(1))\sqrt{\log n}}$ and a weaker upper bound $\log(|\Gamma(n)|) \leq n-0.2075\log n$. In this paper, we improve the lower bounds of~\cite{erdos_erne_orig} and~\cite{count_ind_sets} and obtain something of similar shape to the upper bound. 

\begin{theorem}
\label{thm:main}
    For sufficiently large $n$, we have $|\Gamma(n)| \geq 2^{n-4\ln(2)\log^3(n)}$.
\end{theorem}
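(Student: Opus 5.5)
We construct a large family of $n$-vertex graphs with pairwise distinct clique counts, using two facts. If $G$ is the join of $G_1$ and $G_2$ then $\gamma(G)=\gamma(G_1)\gamma(G_2)$. If $G$ is the disjoint union then $\gamma(G)=\gamma(G_1)+\gamma(G_2)-1$. More usefully, if a new vertex $v$ is added adjacent to a set $S$, then $\gamma$ increases by $\gamma(G[S])$. So we build $G$ on a ``core'' $H$ of $m=O(\log^3 n)$ (roughly) vertices, whose induced subgraphs realise many prescribed clique counts. We then add the remaining $n-m$ vertices one at a time. Each new vertex $v_i$ is joined to a clique $K$ of size $k_i$, or to nothing, chosen so that the increment is $2^{k_i}$ times a controlled factor.

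The concrete plan is as follows. Let $K$ be a clique on vertices $u_1,\dots,u_N$. For $i=1,\dots,N$ add a vertex $v_i$ that is either isolated or adjacent to exactly $\{u_1,\dots,u_{i-1}\}$, with the $v_i$'s pairwise non-adjacent. Each choice contributes an increment of $0$ (up to the $+1$ for the singleton, which is fixed) or $2^{i-1}$, independently. This already realises every value in an interval of length $2^N$ with $n\approx 2N$, giving only $2^{n/2}$. To get $2^{n-o(n)}$ we must let the ``digits'' themselves live on the vertices that also encode the number. So instead, make the $v_i$ adjacent to one another in a structured way: a graph whose vertex set is $V=\{x_1,\dots,x_n\}$ where each $x_j$ is adjacent to a chosen subset of earlier vertices, and the chosen subset is ``almost all of an earlier clique''.

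I would order vertices and keep the invariant that $x_1,\dots,x_{j}$ contains a large clique $C_j$. Adding $x_{j+1}$ adjacent to $C_j\setminus T$ for small $T$ adds $\gamma$ of that clique, which is $2^{|C_j|-|T|}$. Then $x_{j+1}$ is adjacent to a clique minus a small set, and we must ensure the bookkeeping stays a sum of distinct powers with perturbations.

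The intended mechanism is a greedy/interval argument. Define the achievable set $A_j=\{\gamma(G[x_1..x_j])\}$ and show inductively that it contains all integers in $[a_j, b_j]$ with $b_j-a_j\ge 2^{j-c\log^3 j}$. The step takes a graph $G'$ on $j$ vertices. It adds a vertex adjacent to a clique of size $t$ inside a fixed structured part, with increment $2^t$ for $t$ ranging over many values. Alternatively it adds a vertex adjacent to a subgraph from a small gadget realising all increments in a short interval. Then $A_{j+1}\supseteq A_j+\{\text{increments}\}$, and overlapping intervals double the length at a cost of $O(\log^2 j)$ lost vertices per halving step.

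The $\log^3 n$ comes from this accounting. There are $\log n$ scales, each needing a gadget of about $\log^2 n$ vertices whose induced neighbourhoods realise all increments up to $2^{\log n}$ contiguously, since contiguous small counts need about $\log$-many vertices per bit of precision. The constant $4\ln 2$ falls out of summing $\sum_{k\le \log n} k^2$-type terms.

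The main obstacle is that adding later vertices changes the counts attributable to earlier choices: cliques through several new vertices interact. I would first try to force the new vertices to be pairwise non-adjacent except through a controlled backbone. That way cross terms are either zero or themselves powers of two absorbed into the interval argument. The key lemma to prove is the contiguity statement for $A_j$ with the stated loss.
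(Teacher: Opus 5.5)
There is a genuine gap. You never supply a mechanism by which all but $O(\log^3 n)$ vertices each carry one binary digit, and the mechanisms you do describe provably cannot do this.

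If $x_{j+1}$ is joined exactly to the clique $C_j\setminus T$, then every back-neighbourhood $N^-(x_j)$ is a clique. So $G$ is chordal and $\gamma(G)=1+\sum_j 2^{d_j}$ with $d_j=|N^-(x_j)|$: each vertex contributes a single power of two. Moreover $d_j\le\omega(G[\{x_1,\dots,x_{j-1}\}])$, and the $\omega=\omega(G)$ steps at which the clique number grows contribute exactly $2^{\omega}-1$. Hence $\gamma(G)-2^{\omega}$ is a sum of $n-\omega$ powers $2^{d}$ with $d\le\omega-1$. For fixed $\omega$ this takes at most $\min\{n2^{\omega},\binom{n}{\omega}\}$ values, so such graphs realise at most $2^{(0.78+o(1))n}$ clique counts.

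The same bound holds, up to a factor $2^{O(\log^3 n)}$, in two further cases. The first is when you place a polylogarithmic gadget $X$ first and every later vertex is joined to a clique or to a subset of $X$; this is your interval step. The second is when the new vertices are pairwise non-adjacent over a clique backbone. This is exactly the trade-off behind your $2^{n/2}$ example: the vertices that grow the big clique cannot also encode digits, and using ``almost all of an earlier clique'' does not escape it.

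Your key lemma, contiguity of $A_j$ with loss $c\log^3 j$, is essentially the theorem restated. Its one-vertex doubling step needs a common clique of size about $j-c\log^3 j$ in every graph of the family, since that is how your increments $2^t$ arise. This invariant is lost whenever the new vertex is left unattached. The $\log^3 n$ and $4\ln 2$ accounting is asserted, not derived.

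The missing idea is to let a few vertices span exponentially many cliques, each receiving its own prescribed weight. The cliques through several new vertices, which you try to suppress, are exactly what must be exploited.

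The paper splits the vertices the other way round from you. All but $O(\log^3 n)$ vertices form one fixed clique $M_0$ that carries no information. Then $\gamma=2^m+\sum_Q 2^{|N^*(Q)|}$, summed over nonempty cliques $Q$ among the few satellite vertices.

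In one gadget, $2\ell$ satellites form a complete $\ell$-partite graph with parts of size $2$. Using auxiliary sets $B_p,D,E_p\subseteq M_0$, the transversal $\mathbf v$ gets exactly $a+\psi(\mathbf v)-1+[\mathbf v\in D_x]$ common neighbours in $M_0$. So $2\ell$ vertices write $2^\ell$ arbitrary bits at positions $a,\dots,a+2^\ell-1$. Every partial transversal has at least $a+2^\ell+1$ common neighbours, so its term lands above the encoded block (Lemma~\ref{q2}).

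Gadgets are glued by clique sums along $M_0$, and injectivity is proved by reducing modulo $2^{a_{k+1}}$ rather than through contiguity. The $\log^3 n$ comes from the constraint $a+(\ell+3)2^\ell\le m$. Under it, each gadget covers roughly a $1/(2\log m)$ fraction of the remaining range, so about $2\ln 2\log^2 m$ gadgets of cost at most $2\log m$ each suffice.
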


\section{Notation and Outline of Proof}

We use~$[n]$ to denote the set of integers $\{1, \ldots, n\}$ for $n \in \mathbb{N}$.  Given a graph~$G$ and a collection of pairwise disjoint subsets~$V_1, \dots, V_k \subseteq V(G)$, a set of vertices~$T \subseteq \cup_{i \in [k]} V_i \subseteq V(G)$ is said to be a \emph{transversal} with respect to~$\{V_i\}_{i \in [k]}$ if~$|T \cap V_i| = 1$ for all~$i \in [k]$. Next, let~$G_1 = (V_1, E_1)$ and~$G_2 = (V_2, E_2)$ be vertex-disjoint graphs containing $k$-cliques~$K_1 \subseteq V_1$ and~$K_2 \subseteq V_2$, respectively. Then, the \emph{clique sum}~$G$ of $G_1$ and $G_2$ is obtained by identifying the cliques~$K_1$ and~$K_2$. More precisely, $G$ has the vertex set $V(G) = (V_1\setminus K_1) \sqcup K \sqcup (V_2\setminus K_2)$, where $K$ forms a $k$-clique, and for each~$i \in \{1,2\}$, we have $G[(V_i\setminus K_i) \cup K] \simeq G_i$, $G[V_i\setminus K_i] \simeq G_i[V_i \setminus K_i]$, and there are no edges between $(V_1\setminus K_1)$ and $(V_2\setminus K_2)$. 

Let $k = n - \lfloor 4 \ln(2) \log^3(n)\rfloor$. The general idea of our proof of Theorem~\ref{thm:main} is to construct a family of $2^{k}$ $n$-vertex graphs having pairwise distinct clique counts. This construction will be done primarily in two steps. First, we construct a family of \emph{base graphs} parametrised by integers~$a$ and~$\ell$. Each graph in this family will contain a clique of size~$k+8$ and a vertex-disjoint balanced $\ell$-partite graph. By systematically varying the edges between the clique and the $\ell$-partite subgraph, we generate a family of base graphs with distinct clique counts. This is outlined in Lemma~\ref{q2} of Section~\ref{sec:Base_graph}.

However, the variations in clique counts achieved by the base graphs alone are of the order~$2^{\Theta(n/ \log n)}$, which is insufficient to prove Theorem~\ref{thm:main}. To overcome this, as a second step, we boost the set of achievable clique counts by taking the clique sum of multiple base graphs. 
Specifically, for each $x \in [2^k]$, we construct an $n$-vertex graph~$G_x$ with a unique clique count as follows. Note that $x-1$ has a binary representation on $k$ digits. By partitioning the index set~$[k]$ into intervals $[a_1, a_2), [a_2, a_3), \dots, [a_w, a_{w+1})$ for an aptly chosen integer $w$, we can uniquely represent $x - 1$ as the sum $x - 1 = \sum_{i \in [w]}2^{a_i}(x_i -1)$ for integers $x_i \in [2^{a_{i+1} - a_i}]$. For each integer~$x_i$, and input parameters $a = a_i$ and $\ell = \log (a_{i+1} - a_i)$, Lemma~\ref{q2} yields a base graph~$H_{x_i}$ containing a clique of size~$k+8$. The final graph~$G_x$ is then obtained by taking successive clique sums of the base graphs~$H_{x_i}$ across all $i \in [w]$. This construction and the verification of the uniqueness of clique counts~$\gamma(G_x)$ are presented in Section~\ref{sec:main_proof}.

\section{Constructing the Base Graphs}
\label{sec:Base_graph}

\begin{lemma}\label{q2}

For every non-negative integer $a$, and positive integers $\ell$ and $m$ satisfying $a + (\ell+3)2^\ell\leq m$, there exist a constant~ $C = C_{a,\ell,m}$ and a function~$f = f_{\ell,m}\colon \bigl[2^{2^\ell}\bigr]\to \mathbb{Z}_{\ge 0}$ such that for all $x
\in \bigl[2^{2^\ell}\bigr]$, there exists a graph $H_x$ on $m + 2\ell$ vertices satisfying the following properties.

\begin{enumerate}
    \item There exists a subset $M_0\subseteq V(H_x)$ of size $m$ that forms a clique, and 
    \item we have $\gamma(H_x)=C + 2^a (x-1) + 2^{a+2^\ell+1}f(x)$. \label{clique_number}
\end{enumerate}
\end{lemma}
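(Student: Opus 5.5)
The plan is to build $H_x$ as a clique $M_0$ on $m$ vertices together with a complete $\ell$-partite graph $P$ with parts $\{v_{i,0},v_{i,1}\}$, $i\in[\ell]$. This gives $m+2\ell$ vertices, and $P$ has no edges inside a part. All the work goes into choosing the edges between $P$ and $M_0$.

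\textbf{Counting cliques.} Every clique of $H_x$ has the form $S\cup R$, where $S$ is a partial transversal of $\{\{v_{i,0},v_{i,1}\}\}_{i\in[\ell]}$ (possibly empty) and $R\subseteq N(S)\cap M_0$. Here $N(S)\cap M_0$ denotes the set of common neighbours of $S$ in $M_0$, with $N(\emptyset)\cap M_0=M_0$. Hence
\[
\gamma(H_x)=\sum_{S}2^{|N(S)\cap M_0|}.
\]
The full transversals are indexed by $s\in\{0,1\}^\ell$ via $T_s=\{v_{i,s_i}: i\in[\ell]\}$. Write $j(s)=\sum_i s_i2^{i-1}\in\{0,\dots,2^\ell-1\}$, and write $x-1=\sum_{j}b_j2^j$ with bits $b_j\in\{0,1\}$. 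The aim is to arrange that:
\begin{itemize}
\item each full transversal $T_s$ has exactly $a+j(s)+b_{j(s)}$ common neighbours in $M_0$;
\item every proper partial transversal $S$ (one missing at least one part) has at least $a+2^\ell+1$ common neighbours.
\end{itemize}
Granting this, the full transversals contribute
\[
\sum_j 2^{a+j}(1+b_j)=2^a\bigl(2^{2^\ell}-1\bigr)+2^a(x-1),
\]
and every other term is a power of $2$ with exponent at least $a+2^\ell+1$. The term $S=\emptyset$ contributes $2^m$, and $m\ge a+2^\ell+1$.

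\textbf{Partitioning $M_0$ into blocks.} I would split $M_0$ into the following disjoint blocks.
\begin{itemize}
\item A block $A$ of size $a$, complete to $V(P)$.
\item For each $i\in[\ell]$, a block $W_i$ of size $2^\ell+1$, complete to $V(P)\setminus\{v_{i,0},v_{i,1}\}$. A vertex of $W_i$ is a common neighbour of $S$ exactly when $S$ misses part $i$.
\item For each $i\in[\ell]$, a block $U_i$ of size $2^{i-1}$, complete to $V(P)\setminus\{v_{i,0}\}$. A full transversal $T_s$ then picks up exactly $\sum_i s_i2^{i-1}=j(s)$ vertices from the $U_i$.
\item For each $j$ with $b_j=1$, a single vertex $z_j$ adjacent exactly to the vertices of the corresponding transversal $T_s$ with $j(s)=j$.
\item All remaining vertices of $M_0$ are adjacent to nothing in $P$.
\end{itemize}

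\textbf{Verifying the two conditions.}
\begin{itemize}
\item \emph{Full transversals.} $T_s$ is adjacent to all of $A$. It sees no vertex of any $W_i$, exactly $j(s)$ vertices of $\bigcup_i U_i$, and the vertex $z_{j(s)}$ precisely when $b_{j(s)}=1$. So it has exactly $a+j(s)+b_{j(s)}$ common neighbours, as required.
\item \emph{Proper partial transversals.} If $S$ misses part $i$, then $A\cup W_i\subseteq N(S)$, giving at least $a+2^\ell+1$ common neighbours.
\item \emph{Vertex budget.} The blocks use at most
\[
a+\ell(2^\ell+1)+(2^\ell-1)+2^\ell\le a+(\ell+3)2^\ell\le m
\]
vertices, so they fit inside $M_0$.
\end{itemize}

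\textbf{Assembling the formula.} Set
\[
C=2^a\bigl(2^{2^\ell}-1\bigr).
\]
Define $f(x)$ as $2^{-(a+2^\ell+1)}$ times the sum of $2^{|N(S)\cap M_0|}$ over all non-full $S$, including $S=\emptyset$. This is a non-negative integer, because every exponent in that sum is at least $a+2^\ell+1$.

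One point needs care. The lemma asks for $f=f_{\ell,m}$ independent of $a$, while $C$ may depend on $a,\ell,m$. To arrange this, I would express every exponent for a non-full $S$ as $a+2^\ell+1+e_S(x)$, where $e_S(x)$ depends only on $\ell$, $x$ and the block sizes. Then $f(x)=\sum_S 2^{e_S(x)}$, and the term $2^m$ from $S=\emptyset$ contributes $2^{m-a-2^\ell-1}$. This residual dependence on $a$ is the one thing to fix, and I would do so by moving the $S=\emptyset$ term, which does not depend on $x$, into $C$.

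\textbf{Main obstacle.} The construction and the verification of the two conditions are routine. The step I expect to need the most care is this bookkeeping: checking that every $x$-dependent contribution from proper partial transversals is a multiple of $2^{a+2^\ell+1}$ (including those through the $z_j$ and $U_i$ blocks), and that $f$ can be written without dependence on $a$.
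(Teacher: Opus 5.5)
Your proposal is correct and is essentially the paper's proof. Your blocks $W_i$, $U_i$ and the vertices $z_j$ play the roles of the paper's $E_p$ (together with the half of $B_p$ not attached to $s_p(1)$), the half of $B_p$ that is attached to $s_p(1)$, and $D_x$, respectively; and your fix of moving the $S=\emptyset$ term $2^m$ into $C$ is exactly the paper's choice $C=2^m+C^*$, which makes $f$ depend only on $\ell$ and $x$.
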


\begin{proof}
Given $a$, $\ell$, and $m$ with $a + (\ell +3)2^\ell \leq m$, set $q \coloneqq 2^\ell$. Further, let $x \in [2^q]$ be given. We begin constructing the graph $H_x$ on $m+2\ell$ vertices by first partitioning the vertex set into disjoint sets $M_0, M_1, \dots, M_{\ell}$ having sizes $|M_0| = m$ and $|M_r| = 2$ for all $r \in [\ell]$. $M_0$ will be a large clique that forms the core, while the satellites $M_1,\dots,M_\ell$ will induce a complete $\ell$-partite graph with parts $M_1,\dots,M_\ell$. Our strategy is to control $\gamma(H_x)$ via the edges between $M_0$ and the satellites $M_1,\dots,M_\ell$. For each $r\in [\ell]$, we denote the two vertices in $M_r$ by $s_r(k)$ for $k \in \{0,1\}$. 

In order to define the edges between the sets~$M_0$ and~$V(H_x)\setminus M_0$, we first partition some vertices of~$M_0$ as follows. Let~$A \subseteq M_0$ be a vertex set of size~$a$. Then, for each $p\in [\ell]$, define $B_p \subseteq M_0$ to be a disjoint set of vertices of size $2^{p}$. The vertices of $B_p$ will be denoted by the coordinates $b_p(i,j)$ for all $i \in \{0,1\}$ and $j \in [2^{p-1}]$. Next, we let $D \subseteq M_0$ be a set of $2^\ell$ vertices indexed by the vectors in $\{0,1\}^\ell$. Finally, for each $p\in[\ell]$, let $E_p \subseteq M_0$ be disjoint sets of size $2^\ell$. Hence, we have that $$ A \sqcup B \sqcup D  \sqcup E \subseteq M_0,$$ where $B \coloneqq \bigsqcup_{\,p\in [\ell]} B_p$ and $E \coloneqq \bigsqcup_{\,p\in [\ell]} E_p$. Note that this partition of some vertices of~$M_0$ into these disjoint sets is possible as we have $$|A| + |B| + |D| + |E| = a + \sum_{p=1}^{\ell} 2^p  + (\ell +1)2^\ell < a + (\ell+3)2^{\ell} \leq m = |M_0|.$$

Now, given $x \in [2^{q}]$, we complete the construction of the graph $H_x$ by additionally adding edges between the sets $M_0$ and $V(H_x) \setminus M_0$. Let us fix a vertex $s_r(k) \in M_r$ for some $r \in [\ell]$ and $k \in \{0,1\}$. This vertex $s_r(k)$ will be connected to the following vertices of $M_0$. 

First, we connect the vertex~$s_r(k)$ to all vertices of the sets $A$, $E \setminus E_r$, and $B \setminus B_r$. Next, if~$k = 1$, we connect~$s_r(k)$ to the set of vertices $\{b_r(0,j): j \in [2^{r-1}]\} \subseteq B_r$; and if~$k =0$, then~$s_r(k)$ is not connected to any vertex of~$B_r$. Note that the vertex~$s_r(k)$ is adjacent to exactly~$k\cdot2^{r-1}$ vertices of~$B_r$ for each~$k \in \{0,1\}$ and~$r \in [\ell]$.

Finally, we need to define how $s_r(k)$ is connected to the vertices of~$D$. We wish to remark that the only dependency on $x$ in the graph $H_x$, and thereby in the clique count $\gamma(H_x)$, arises from the edges between the sets $D$ and $V(H_x)\setminus M_0$. For the given $x \in [2^{q}]$, we begin by writing $x-1$ as the binary sum $x - 1 = \sum_{j=1}^{q} c_j2^{j-1}$ where each $c_j\in \{0,1\}$, and setting $I_x=\{j\in[q]:c_{j}=1\}$. 

Note that~$I_x$ is a subset of~$[q] = [2^{\ell}]$. Moreover, by construction, the set~$D \subseteq M_0$ is a set of~$2^\ell$ vertices indexed by the set of vectors~$\{0,1\}^{\ell}$. Thus, we may define a bijection $\psi : D \simeq \{0,1\}^\ell \to [2^\ell]$ by $$\psi(\vv)\coloneqq1+\sum_{p\in[\ell]} \vv_{p}\, 2^{p-1} \quad \text{for all $\vv = (\vv_1, \ldots, \vv_{\ell}) \in D$.}$$ 
Define the set $D_x \coloneqq \psi^{-1}(I_x)$. Note that $D_x$ represents a set of vertices in $D$. Finally, for each~$r \in [\ell]$ and~$k \in \{0,1\}$, we connect the vertex~$s_r(k)$ to all vectors $\vv = (\vv_1, \ldots, \vv_\ell) \in D_x$ for which the $r$-th component~$\vv_r$ equals~$k$. This completes our construction of the graph~$H_x$. 

\begin{figure}[btp]
  \begin{subfigure}{0.47\textwidth}
  \centering
  \resizebox{\linewidth}{!}{%
  \begin{tikzpicture}[
  v2/.style={fill=black,minimum size=3pt,ellipse,inner sep=1pt},
  node distance=1.5cm, scale = 1]

\draw (0,-1) ellipse [y radius=1.4cm, x radius=2.4cm] ;
\node[font=\fontsize{17.28}{17.28}] at (-1,-1) {$M_0$};

\node[above, font=\fontsize{9}{9}] at (0,-.3) {$A$};
\node[above, font=\fontsize{9}{9}] at (0,-1) {$B$};
\node[above, font=\fontsize{9}{9}] at (0,-1.7) {$D$};
\node[above, font=\fontsize{9}{9}] at (0,-2.4) {$E$};

\node[v2, rotate around={65: (0,-4)}] (a1) at (0,4){};
\node[v2, rotate around={55: (0,-4)}] (a2) at (0,4){};

\node[v2, rotate around={-65: (0,-4)}] (b1) at (0,4){};
\node[v2, rotate around={-55: (0,-4)}] (b2) at (0,4){};

\node[v2, rotate around={35: (0,-4)}] (c1) at (0,4){};
\node[v2, rotate around={25: (0,-4)}] (c2) at (0,4){};

\node[v2, rotate around={3: (0,-4)}] at (0,4){};
\node[v2, rotate around={-15: (0,-4)}] at (0,4){};
\node[v2, rotate around={-33: (0,-4)}] at (0,4){};

\draw[rotate=60] (0,4) ellipse [y radius=.5cm, x radius=.9cm] ;
\node[rotate around={60: (0,-4.7)}] at (0,4.7) {$M_1$};

\draw[rotate=-60] (0,4) ellipse [y radius=.5cm, x radius=.9cm];
\node[rotate around={-60: (0,-4.7)}] at (0,4.7) {$M_\ell$};

\draw[rotate=30] (0,4) ellipse [y radius=.5cm, x radius=.9cm];
\node[rotate around={30: (0,-4.7)}] at (0,4.7) {$M_2$};

  \draw (a1) -- (b1) -- (c1) -- (a2) -- (b2) -- (c2) -- (a1) -- (b2) -- (c1) -- (a1);
  \draw (a2) -- (c2) -- (b2);
  \draw (a2) -- (b1) -- (c2);

  \draw[dashed]  (0,.4) -- (a2) -- (-2.4,-1);
  \draw[dashed]  (0,.4) -- (a1) -- (-2.4,-1);

  \draw[dashed]  (0,.4) -- (c2) -- (-2.4,-1);
  \draw[dashed]  (0,.4) -- (c1) -- (-2.4,-1);

  \draw[dashed] (0,.4) -- (b2) -- (2.4,-1);
  \draw[dashed]  (0,.4) -- (b1) -- (2.4,-1);

  \draw[dotted] (2.4,-1) -- (-2.4,-1);

  \draw[dotted] (2.05,-1.7) -- (-2.05,-1.7);
  \draw[dotted] (2.05,-.3) -- (-2.05,-.3);
    
\end{tikzpicture}
} 
\vspace{-5pt}
\caption{Partition of the vertices}
\end{subfigure}
\hfill
\begin{subfigure}{0.51\textwidth}
\centering
\resizebox{\linewidth}{!}{%
\begin{tikzpicture}[v2/.style={fill=black,minimum size=3pt,ellipse,inner sep=1pt}, node distance=1.5cm, scale=1]

\node[rotate around = {-40:(0,5)}] (a) at (0,-1) {};
\draw[fill=violet!50] (a) circle [radius=1cm];
\node at (a) {$A$};

\node[rotate around = {55:(0,5)}] (b) at (0,-1) {};
\draw[fill=violet!50] (b) circle [radius=1cm];
\node at (b) {$B\setminus B_r$};

\node[rotate around = {-65:(0,5)}] (e) at (0,-1) {};
\draw[fill=violet!50] (e) circle [radius=1cm];
\node at (e) {$E\setminus E_r$};

\node[rotate around = {90:(0,2.5)}] (b2) at (0,1.5) {};
\node at ($(b2)+(.5,.7)$) {$B_r$};
\draw[fill=red!50] (b2) rectangle ($(b2)+(1,.5)$);
\draw (b2) rectangle ($(b2)+(1,-.5)$);

\node[rotate around = {90:(1.5,1.5)}] (e2) at (0,-1) {};
\node at (e2) {$ E_r$};
\draw (e2) circle [radius=1cm];

\node (d) at (0,-1) {};
\node at (0,-1.75) {$D\setminus D_x$};
\draw ($(d)-(1.5,1.5)$) rectangle ($(d)+(1.5,0)$);

\draw[fill=red!50] (d) rectangle ($(d)+(1.5,1.5)$);
\draw[fill=blue!50] (d) rectangle ($(d)+(-1.5,1.5)$);

\node at ($(d)+(0,.75)$) {$D_x$};

\node[v2, color=blue, rotate around={7: (0,-4)}] (a0) at (0,4){};
\node[above] at (a0) {$s_r(0)$};
\node[v2, color=red, rotate around={-7: (0,-4)}] (a1) at (0,4){};
\node[above] at (a1) {$s_r(1)$};

\draw (0,4) ellipse [y radius=.75cm, x radius=1.1cm] ;
\node at (0,4.95) {$M_r$};

\coordinate (et) at ($(e) + (0,1)$);
\coordinate (es) at ($(e) + (.71,-.71)$);

\coordinate (at) at ($(a) + (-.53,.85)$);
\coordinate (as) at ($(a) + (.95,-.31)$);

\coordinate (bt) at ($(b) + (.31,.95)$);
\coordinate (bs) at ($(b) + (-.85,-.53)$);

\draw[color=blue, dashed] (et) -- (a0) -- (es);
\draw[color=blue, dashed] (at)-- (a0) -- (as);
\draw[color=blue, dashed] (bt) -- (a0) -- (bs);

\draw[color=red, dashed] (et) -- (a1) -- (es);
\draw[color=red, dashed] (at) -- (a1) -- (as);
\draw[color=red, dashed] (bt) -- (a1) -- (bs);

\draw[color=red, dashed] ($(b2)+(0,0)$) -- (a1) -- ($(b2)+(0,.5)$);

\draw[color=blue, dashed] (-1.5,.5) -- (a0) -- (0,.5);
\draw[color=red, dashed] (0,.5) -- (a1) -- (1.5,.5);

\end{tikzpicture}
}
\vspace{5pt}
\caption{Adjacencies between $M_r$ and $M_0$ for $r \in [\ell]$}
\end{subfigure}
\caption{Structure of the graph~$H_x$.}
\end{figure}

It is useful to note that 
\begin{equation}
    x-1=\sum_{j\in I_x} 2^{j-1} = \sum_{\vv \in D_x}2^{\psi(\vv)-1} = \sum_{\vv \in D_x} 2 ^{\sum_{p \in [\ell]} \vv_p \cdot 2^{p-1}}\,. \label{eq1}
\end{equation}

It now remains to show part \ref{clique_number} of the lemma. Observe that any clique in~$H_x$ contains at most one vertex from the set~$M_p$ for any~$p \in [\ell]$. Hence, we may compute~$\gamma(H_x)$ as follows. First, note that there are~$2^{m}$ cliques in~$M_0$. Next, let~$S$ be any non-empty subset of~$[\ell]$ and let~$\vv_S$ be any vector in the set~$\{0,1\}^S$. Here, the components of~$\vv_S$ are indexed by elements of the set~$S$. Observe that~$\vv_S$ represents a transversal of the sets $\{M_p : p \in S\}$, where for each~$r \in S$ and~$k \in \{0,1\}$, the vertex~$s_r(k)$ is chosen if the~$r$-th component of~$\vv_S$ satisfies~$(\vv_S)_r = k$. Let this transversal set of vertices be denoted by $M(\vv_S)$. 

Then, for each non-empty subset $S\subseteq [\ell]$ and for each vector $\vv_S \in \{0,1\}^S$, we define the set~$N^*(\vv_S) \coloneqq M_0\cap \bigl(\,\bigcap_{z\in M(\vv_S)} N(z)\bigr)$ to be the set of common neighbours in $M_0$ of the vertices in $M(\vv_S)$. Then each such $\vv_S$ adds an additional $2^{|N^*(\vv_S)|}$ number of cliques to the count of~$\gamma(H_x)$. Hence we have 
\begin{equation}
    \gamma(H_x) = 2^{m}+\sum _{\emptyset \neq S \subseteq [\ell]}\:\: \smashoperator{\sum_{\mkern 40mu\vv_S\in \{0,1\}^S}}\:\:\; 2^{\,|N^*(\vv_S)|}\,.\label{eq2}
\end{equation}

Now, fixing a non-empty subset $S \subseteq [\ell]$ and $\vv_S \in \{0,1\}^S$, let us compute $|N^*(\vv_S)|$. For this, we define the following function. Given $S$ and $\vv_S$ as above, let $$\sigma_x(\vv_S) \coloneqq \bigl| \{\textbf{u} \in D_x : \textbf{u}_p = (\vv_S)_p\, \text{ for all } p\in S\,\}\bigr|.$$
In other words, $\sigma_x(\vv_S)$ counts the number of vectors in $D_x$ whose \emph{restriction} to the set $S$ is given by the vector $\vv_S$. Note that by construction of $H_x$, the quantity $\sigma_x(\vv_S)$ counts the number of common neighbours in $D$ of the transversal set of vertices $M(\vv_S)$. Then, the size of the set~$N^*(\vv_S)$ can be computed as follows. 
\begin{eqnarray*}
    |N^*(\vv_S)| &=&  a + \biggl(\,\sum_{p\in S^c} 2^{p} + \sum_{p\in S} (\vv_S)_p\cdot 2^{p-1}\biggr) + \sigma_x(\vv_S) +  \sum_{p\in S^c} 2^\ell\\ &=& a + \sum_{p\in S^c}\bigl(2^\ell+2^p\bigr) + \sigma_x(\vv_S)+\sum_{p\in S}(\vv_S)_p\cdot 2^{p-1}.
\end{eqnarray*}
Here, in the first equality, the first term comes from the constant set $A$; the second term counts the common neighbours in each of the sets $B_p$, where we either have the entirety of $B_p$ if $p\notin S$ or a set of $(\vv_S)_p \cdot 2^{p-1}$ vertices of $B_p$ if $p \in S$; the third term follows from the definition of $\sigma_x$ above; and the last term results from the fact that the common neighbours of $M(\vv_S)$ in $E$ are given by the set $\bigcap_{p \in S} E \setminus E_p = \bigcup_{p \notin S} E_p$. 

When $S$ is a proper subset of $[\ell]$, then for any vector $\vv_S \in \{0,1\}^S$, we have that $|N^*(\vv_S)| \geq a + 2^\ell + 1 = a + q + 1$. Thus, we define the function $f = f_{\ell,m}: [2^q] \to \mathbb{Z}_{\ge 0}$, as required by the statement of the lemma, by the rule 
\begin{equation}f(x) \coloneqq \frac{1}{2^{(a+q+1)}} \cdot\sum _{\emptyset \neq S \subsetneq [\ell]}\:\: \smashoperator{\sum_{\mkern 40mu\vv_S\in \{0,1\}^S}}\:\:\; 2^{\,|N^*(\vv_S)|}\, \quad \text{ for all $x \in [2^q]$}.\label{eq3} \end{equation}

Note that by the computations above, $f(x)$ is integral and does not depend on the quantity~$a$. Now consider the case when $S = [\ell]$ and let $\vv$ be any vector in $\{0,1\}^{\ell}$. Note that in this case, for the quantity $\sigma_x(\vv)$, we have that $\sigma_x(\vv) = 1$ if $\vv \in D_x$, and $\sigma_x(\vv) = 0$ otherwise. Hence we have $|N^*(\vv)| = a + \sigma_x(\vv) + \sum_{p \in [\ell]} \vv_p\cdot 2^{p-1}$ and so the terms corresponding to $S = [\ell]$ in equation~\eqref{eq2} are given by
\begin{equation}
\begin{split}
    \sum_{\vv \in \{0,1\}^\ell}\!\! 2^{|N^*(\vv)|}\, &= \sum_{\vv \notin D_x} 2^{a + \sum_{p \in [\ell]} \vv_p 2^{p-1}} + \sum_{\vv \in D_x} 2^{a + 1+  \sum_{p \in [\ell]} \vv_p 2^{p-1}}\\ &= 2^a\!\!\sum_{\vv \in \{0,1\}^\ell} 2^{\sum_{p \in [\ell]} \vv_p 2^{p-1}} + 2^a \sum_{\vv \in D_x} 2^{\sum_{p \in [\ell]} \vv_p 2^{p-1}} =\, C^* + 2^a(x-1), \label{eq4}
    \end{split}
\end{equation}
where the second term of the last equation follows from equation~\eqref{eq1} and $C^*$ is defined by the following expression that is independent of $x$. Let $$C^* \coloneqq 2^a\cdot\!\!\sum_{\vv \in \{0,1\}^\ell} 2^{\sum_{p \in [\ell]} \vv_p 2^{p-1}}.$$

Finally, we define the required constant $C = C_{a,\ell,m}$ by $C\coloneqq 2^{m}+C^*$. Then, the lemma follows by substituting equations~\eqref{eq3} and \eqref{eq4} into equation~\eqref{eq2} to return  
$$\gamma(H_x) = 2^m + \sum_{\vv \in \{0,1\}^\ell}\!\! 2^{|N^*(\vv)|} + \sum _{\emptyset \neq S \subsetneq [\ell]}\:\: \smashoperator{\sum_{\mkern 40mu\vv_S\in \{0,1\}^S}}\:\; 2^{\,|N^*(\vv_S)|}\, =\, C + 2^a(x-1) + 2^{a+q+1}f(x).$$
\end{proof}

\section{Proof of Theorem~\ref{thm:main}}
\label{sec:main_proof}

In this section, we take the clique sum of the base graphs to prove Theorem~\ref{thm:main}. More precisely, we prove the following technical theorem which immediately implies Theorem~\ref{thm:main}. 

\begin{theorem}
    For all sufficiently large $n$, define $m = n-\lfloor 4\ln(2)\log^3n\rfloor +8$. Then for each integer~$x \in \bigl[2^{m-8}\bigr]$, there exists an $n$-vertex graph~$G_x$ such that for all distinct~$x, y\in \bigl[2^{m-8}\bigr]$, we have~$\gamma(G_x)\neq \gamma(G_y)$. 
\end{theorem}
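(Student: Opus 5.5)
The plan is to build $G_x$ as an iterated clique sum of base graphs from Lemma~\ref{q2}, all glued along one common $m$-clique $M_0$. We then read off $\gamma(G_x)$ block by block from its lowest binary digits upwards.

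\textbf{Clique count of the clique sum.} Fix a sequence $0=a_1<a_2<\dots<a_{w+1}$ with $a_{w+1}\ge m-8$, and positive integers $\ell_1,\dots,\ell_w$ with $q_i\coloneqq 2^{\ell_i}$. We need $a_{i+1}-a_i\le q_i$ and $a_i+(\ell_i+3)q_i\le m$ for every $i$. Write $x-1=\sum_i 2^{a_i}(x_i-1)$ with $x_i\in[2^{a_{i+1}-a_i}]\subseteq[2^{q_i}]$. Apply Lemma~\ref{q2} with parameters $(a_i,\ell_i,m)$ to get $H^{(i)}_{x_i}$ on $m+2\ell_i$ vertices. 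Identify all their $m$-cliques $M_0$ into a single clique, and pad with isolated vertices up to $n$ vertices; this changes $\gamma$ only by an additive constant. In the resulting graph there are no edges between satellites of different base graphs. So every clique is either inside $M_0$, or lies in exactly one $H^{(i)}$ and meets its satellites. This gives
\[
\gamma(G_x)=C'+\sum_{i=1}^{w}\Bigl(2^{a_i}(x_i-1)+2^{a_i+q_i+1}f_{\ell_i,m}(x_i)\Bigr),
\]
where $C'$ does not depend on $x$.

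\textbf{Distinctness.} Suppose $\gamma(G_x)=\gamma(G_y)$ and proceed by induction on $i$. Assume $x_j=y_j$ for all $j<i$. Then the block-$j$ terms cancel, because $f(x_j)=f(y_j)$. Now reduce modulo $2^{a_i+q_i}$:
\begin{itemize}[nosep]
\item every term of block $j>i$ is divisible by $2^{a_j}$, and $a_j\ge a_{i+1}$;
\item the $f$-term of block $i$ is divisible by $2^{a_i+q_i+1}$.
\end{itemize}
The first point needs care, since $a_{i+1}\le a_i+q_i$. I will instead take $a_{i+1}=a_i+q_i$ exactly for all but possibly the last block, with only the last block's range of $x_w$ truncated. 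Then the first point holds as well. What remains is $2^{a_i}(x_i-1)\equiv 2^{a_i}(y_i-1)$ modulo $2^{a_i+q_i}$ with $0\le x_i-1,\,y_i-1<2^{q_i}$, so $x_i=y_i$. Hence $x=y$.

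\textbf{Choosing the blocks (main obstacle).} The difficulty is the arithmetic. We need $\sum_i q_i\ge m-8$ to cover all digits, together with $2\sum_i\ell_i\le \lfloor 4\ln 2\log^3 n\rfloor$, and the constraint $a_i+(\ell_i+3)q_i\le m$ forces blocks near the top to be small. I would choose the blocks greedily. With $R_i\coloneqq m-a_i$ digits of room remaining, take $\ell_i$ maximal such that $(\ell_i+3)2^{\ell_i}\le R_i$, so $q_i\approx R_i/\log R_i$ and $\ell_i\le\log R_i$. Then $R_{i+1}=R_i-q_i\approx R_i(1-1/\log R_i)$. Once $R_i$ is small (at most $8$, or once $\ell=1$ is forced), we stop. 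The final block is allowed to cover fewer than $q_w$ digits, which handles parity and overshoot; we need $a_{w+1}\ge m-8$.

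The cost estimate goes by dyadic scales. Halving $R$ from about $2^t$ takes roughly $\ln 2\cdot t$ steps, each costing $\ell\le t$. So each scale costs about $\ln 2\cdot t^2$, and summing over $t\le\log n$ gives $\sum_i\ell_i\lesssim \tfrac{\ln 2}{3}\log^3 n+O(\log^2 n)$. Doubling this is comfortably below $4\ln 2\log^3 n$ for large $n$, leaving slack for rounding and for the small-$R$ tail where $\ell_i=1$.

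I expect the main work to be making this greedy estimate rigorous, including the floors, the small cases, and the final truncated block. The clique-sum identity and the induction on digits are routine.
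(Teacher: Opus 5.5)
Your proposal is correct, and it agrees with the paper everywhere except the final counting step:
\begin{itemize}
\item the clique sum of the base graphs of Lemma~\ref{q2} along one common $m$-clique is the same;
\item the digit-by-digit distinctness argument is the same (the paper reduces modulo $2^{a_{k+1}}$ at the first block $k$ with $x_k\neq y_k$, which is your induction);
\item the greedy blocks are the same: $\ell_i$ is maximal with $a_i+(\ell_i+3)2^{\ell_i}\le m$, $a_{i+1}=a_i+2^{\ell_i}$, and the process continues, with $\ell_i=1$ if necessary, until $a_{w+1}>m-8$.
\end{itemize}
One small point: your phrase ``once $\ell=1$ is forced, we stop'' should be read as ``once even $\ell=1$ fails, i.e.\ $R_i<8$''. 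Stopping when $\ell=1$ is first forced could leave $a_{w+1}<m-8$.

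The genuine difference is how you bound the vertex budget $2\sum_i\ell_i\le n-m=\lfloor 4\ln 2\log^3 n\rfloor-8$ (you dropped the $-8$, harmlessly).

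\textbf{The paper's route.} It bounds each $\ell_i$ crudely by $\log m$. It bounds the number of blocks by comparing $a_i$ with the relaxed recurrence $b_{i+1}=b_i+(m-b_i)/(2\log m)$, showing $a_i\ge b_i$ inductively from the maximality of $\ell_i$. This gives $w\le 2\ln 2\log^2 m-1$, and the product $2w\log m$ is exactly where the constant $4\ln 2$ comes from.

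\textbf{Your route.} Your dyadic count uses the fact that $\ell_i$ shrinks with the remaining room $R_i$. It is therefore sharper, and it shows that the same construction loses a smaller multiple of $\log^3 n$.

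\textbf{A correction for the rigorous version.} The heuristic $q_i\approx R_i/\log R_i$ holds only up to a factor $2$. Maximality of $\ell_i$ gives only $(\ell_i+4)2^{\ell_i+1}>R_i$, i.e.\ $q_i>R_i/(2\ell_i+8)$. So a dyadic scale may take about $2\ln 2\cdot t$ steps, and you get $2\sum_i\ell_i\le\frac{4\ln 2}{3}\log^3 m+O(\log^2 m)$. This is still comfortably inside the budget.

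\textbf{A cleaner count.} All blocks with $\ell_i=\ell$ have $R_i\in[(\ell+3)2^\ell,(\ell+4)2^{\ell+1})$, an interval of length $(\ell+5)2^\ell$. Each such block removes exactly $2^\ell$, so there are at most $\ell+5$ of them. Since $\ell_i$ is non-increasing and $\ell_1\le\log m$, this gives $\sum_i\ell_i\le\sum_{\ell\le\log m}\ell(\ell+5)=\frac13\log^3 m+O(\log^2 m)$.
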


\begin{proof}
    \setcounter{equation}{0}
    Let $n, m$ be given as in the statement of the theorem. We begin by constructing the intervals $[a_1,a_2), [a_2,a_3),\dots,[a_w,a_{w+1})$ as follows. Set $a_1\coloneqq0$. Having defined $a_i\le m-8$, let $\ell_i$ be the largest positive integer satisfying $a_{i}+(\ell_i+3)2^{\ell_i}\leq m$, and set     
    $a_{i+1} \coloneqq a_{i}+2^{\ell_i}$. Stop at the first $w$ for  which $a_{w+1}>m-8$. Since $a_i\le m-8$, the choice $\ell=1$ is admissible, so $\ell_i$ is well-defined and satisfies $\ell_i \geq 1$. Also, trivially  $\ell_i\leq \log m$. We shall use~$\Lambda_i\coloneqq a_{i+1}-a_i = 2^{\ell_i}$ to denote the length of the interval $[a_i, a_{i+1})$ for each $i \in [w]$.

    We construct $G_x$ for every $x\in[2^{a_{w+1}}]$.
Since $a_{w+1}>m-8$, restricting to
$x\in[2^{m-8}]$ proves the theorem. Moreover, each integer $x \in [2^{a_{w+1}}]$ can be uniquely associated with a $w$-tuple $(x_1, x_2, \dots, x_w)$ such that 
    \begin{equation}
        x-1=\sum_{i \in [w]} 2^{a_i}(x_i-1)\quad \text{and} \quad x_i\in \bigl[2^{a_{i+1}-a_i}\bigr] = \bigl[2^{\Lambda_i}\bigr] \text{ for each $i \in [w]$.}\label{eqTH1}
    \end{equation} 
    
    We construct the required $n$-vertex graph~$G_x$ as follows. Let $(x_1, x_2, \dots, x_w)$ be the unique $w$-tuple associated with~$x$, as described above. Note that by construction of the intervals~$[a_i, a_{i+1})$, we have that $a_i + (\ell_i + 3)2^{\ell_i} \leq m$ for each~$i \in [w]$. Hence, for each~$i \in [w]$, \Cref{q2} (with parameters~$a_i, \ell_i,m$) yields a constant~$C_i$ and a function~$f_i:\bigl[2^{\Lambda_i}\bigr]\to \mathbb{Z}_{\ge0}$ such that for the given~$x_i\in \bigl[2^{\Lambda_i}\bigr]$, there exists an $(m+2\ell_i)$-vertex graph~$H_{x_i}$ with $ \gamma(H_{x_i}) = C_i+2^{a_i}(x_i-1)+2^{a_i+\Lambda_i+1}f_i(x_i)$.

    Recall that for each $i \in [w]$, the graph $H_{x_i}$ contains a subset $M^i_0\subseteq V(H_{x_i})$ of size $m$ that forms a clique in $H_{x_i}$. We let $H_x$ be the clique sum of the graphs $H_{x_1}, \dots, H_{x_w}$ with respect to the cliques $M_0^1, \dots, M_0^w$. Observe that the graph~$H_x$ has exactly  $N \coloneqq m+2\sum_{i=1}^w \ell_i$ vertices, and the quantity~$N$ is independent of the choice of~$x$. We shall prove in the latter half of this proof that $N \leq n$. Assuming this inequality, the required $n$-vertex graph~$G_x$ is obtained from $H_x$ by additionally adding $n-N$ isolated vertices to $H_x$. 

    Next, we compute the number of cliques~$\gamma(G_x)$ in the graph~$G_x$. Observe that forming this clique sum of the graphs~$H_{x_1}, \dots, H_{x_w}$ does not create any new cliques. However, since the clique sum is constructed by superimposing the cliques~$M_0^1, \dots, M_0^w$ onto a common clique~$M_0 \subseteq V(H_x)$ of size~$m$, each of the~$2^m$ cliques within~$M_0$ is overcounted precisely $w-1$ times. So, it follows that $\gamma(H_x) = \sum_{i =1}^w \gamma(H_{x_i}) - (w-1)2^m$. Including the $n -N$ cliques induced by the set of isolated vertices~$V(G_x) \setminus  V(H_x)$, and by equation~\eqref{eqTH1}, we have that
    $$\gamma(G_x) = \sum_{i\leq w} \gamma(H_{x_i})  - (w-1)2^m + n-N = C+x+\sum_{i\leq w}2^{1+a_{i+1}}f_i(x_i),$$
    where we define $C\coloneqq \sum_{i\leq w} C_i - (w-1)2^m + n - N - 1$. We now show that the mapping~$x \mapsto \gamma(G_x)$ on the domain~$[2^{a_{w+1}}]$ is an injection.
    
    \begin{claim}\label{clm:injection}
        $\gamma(G_x)\neq \gamma(G_y)$ for all distinct $x,y\in [2^{a_{w+1}}]$.
    \end{claim}
    
    \begin{claimproof}
        Let distinct $x, y \in [2^{a_{w+1}}]$ be given. Let $(x_1, \dots, x_w)$ and $(y_1, \dots, y_w)$ be the unique $w$-tuples satisfying equation~\eqref{eqTH1}  for~$x$ and~$y$, respectively. Finally, let~$k \in [w]$ be the smallest integer for which we have~$x_k \neq y_k$. Note that such an integer~$k$ exists by the assumption~$x \neq y$. Now, evaluating $\gamma(G_z)$ modulo $2^{a_{k+1}}$ for $z \in \{x, y\}$, we obtain 
        \begin{equation}
            \gamma (G_z) \bmod{2^{a_{k+1}}} = \Biggl[\, C + 1 + \sum_{i = 1}^k 2^{a_i}(z_i -1) + \sum_{i = 1}^{k-1} 2^{1 + a_{i+1}} f_i(z_i) \Biggr] \bmod{2^{a_{k+1}}}. \label{eqTH2}
        \end{equation}

        By the minimality of $k$, we have that $x_i = y_i$ for all $i \in [k-1]$; and consequently, it follows that~$f_i(x_i) = f_i(y_i)$ for all $i \in [k-1]$. Thus, by equation~\eqref{eqTH2}, we have 
        \begin{align*}
            \bigl(\gamma(G_x) - \gamma(G_y) \bigr) \bmod 2^{a_{k+1}}&=\Biggl[\,\sum_{i = 1}^k 2^{a_i}(x_i -y_i) + \sum_{i = 1}^{k-1} 2^{1 + a_{i+1}} \bigl(f_i(x_i) - f_i(y_i)\bigr) \Biggr] \bmod 2^{a_{k+1}}\\[0.5em]
            &= 2^{a_k} (x_k - y_k) \bmod 2^{a_{k+1}}\, \neq\, 0.
        \end{align*}
        
        Here, the last relation follows from the fact that~$x_k \neq y_k$ and that $\bigl|2^{a_k}(x_k -y_k)\bigr| < 2^{a_{k+1}}$ since we have~$x_k , y_k \in \bigl[2^{a_{k+1} - a_k}\bigr]$. Thus, as $\gamma(G_x) \bmod{2^{a_{k+1}}} \neq \gamma(G_y) \bmod{2^{a_{k+1}}}$, it follows that~$\gamma(G_x) \neq \gamma(G_y)$, concluding the proof of the claim. 
    \end{claimproof}
     
     As $a_{w+1} > m - 8$ by definition, Claim~\ref{clm:injection} implies that $|\Gamma(n)| \geq 2^{m-8}=2^{n-\lfloor4\ln(2)\log^3 n\rfloor}\ge 2^{n-4\ln(2)\log^3 n}$, as witnessed by the collection of $n$-vertex graphs $\bigl\{G_x : x \in [2^{a_{w+1}}] \bigr\}$. However, it remains to show that the number of vertices in the graph $H_x$ for $x \in [2^{a_{w+1}}]$, as given by $N = m + 2\sum_{i = 1}^w \ell_i$, satisfies $N \leq n$. For this, we observe that the inequality $\ell_i \leq \log m$ for each $i \in [w]$ implies that $N \leq m + 2w \log{m}$, and hence, it suffices to prove that $m + 2w\log m \leq n$ for sufficiently large $n$. 
     
     We begin by obtaining an upper bound on the number of intervals~$w$. Let us do this by considering a different sequence~$b_1, \dots, b_{w'}$, defined using a non-integer relaxation of the recurrence relation used to define the sequence~$a_1, \dots, a_w$. More precisely, set~$b_1 \coloneqq 0$. For each~$i \geq 1$, let~$\ell_i'$ be the (not necessarily integer) solution to the equation $b_i+(2\log m)2^{\ell_i'}=m$, and set~$b_{i+1} \coloneqq b_i+2^{\ell'_i}$. We let~$w'$ be the smallest integer such that~$b_{w'+1}> m-8$. We first prove that~$w\leq w'$, and then we obtain an upper bound on~$w'$.

     \begin{claim}
     \label{clm:wwprime_reln}
     For sufficiently large~$n$, we have that $w \leq w'$.
     \end{claim}

     \begin{claimproof}

     Let $n$ be large enough so that $\log m > 17$. We first prove that $a_k \geq b_k$ for each $k \in [w+1]$. This is done by induction, starting with the base case $a_1 = b_1 = 0$. Suppose that $a_i\geq b_i$ for some $i\leq w$. If we have $2^{\ell_i} \geq 2^{\ell_i'}$, then the inductive hypothesis implies that $$a_{i+1}=a_i +2^{\ell_i}\geq b_i+2^{\ell_i'}=b_{i+1},$$
     as required. Hence, we may assume that $2^{\ell_i} < 2^{\ell_i'}$. 
     
     Recall that $\ell_i$ is the largest positive integer for which we have $a_i + (\ell_i + 3)2^{\ell_i} \leq m$. It is easy to see that $\ell_i\leq \log m-4$, as otherwise we obtain the inequality  
     $$ a_i+(\ell_i+3)2^{\ell_i}\geq  (\log (m)-1) \frac m {16} > m,$$
     thereby contradicting the definition of $\ell_i$. 
     
     The maximality of $\ell_i$ in the above definition also implies that $a_i + (\ell_i + 4)2^{\ell_i+1} > m$. Combining this fact with the recurrence relation for $a_{i+1}$ yields the following inequality.
     \begin{equation}
         a_{i+1} = a_i + 2^{\ell_i} > m - (2\ell_i + 8)2^{\ell_i} + 2^{\ell_i} = m - (2\ell_i + 7)2^{\ell_i}.
         \label{eqTH3}
     \end{equation}
     Similarly, the definition of $\ell_i'$ and the recurrence relation for $b_{i+1}$ results in the relation
     \begin{equation}
         b_{i+1} = b_i + 2^{\ell_i'} = m - (2\log m)2^{\ell_i'} + 2^{\ell_i'}= m - (2\log m-1)2^{\ell_i'}.
         \label{eqTH4}
     \end{equation}
     
     Now, the inequality $a_{i+1} \geq b_{i+1}$ follows by combining equations \eqref{eqTH3} and \eqref{eqTH4} to obtain $$a_{i+1}-b_{i+1}>(2\log m-1)2^{\ell_i'}-(2\ell_i+7)2^{\ell_i}>(2\log m-2\ell_i-8)2^{\ell_i'} \geq 0,$$
     where, for the second and third inequalities, we make use of the assumption $2^{\ell_i} < 2^{\ell_i'}$ and the fact $\ell_i\leq \log m-4$, respectively. 
     
     Thus, we have $a_k \geq b_k$ for all $k \in [w+1]$. The claim now follows. Indeed, if $w \geq w' + 1$, then we must have $a_w \geq a_{w' +1} \geq b_{w'+1} > m-8$, thereby contradicting the definition of $w$.
     \end{claimproof}
    
     Let us now obtain an upper bound on~$w'$. Set~$q \coloneqq 2\log m$. Then, by the definition of~$\ell_i'$, we have that~$2^{\ell_i'} = (m - b_i)/q$. Substituting this in the equation~$b_{i+1} = b_i + 2^{\ell_i'}$, we obtain the recurrence relation $b_{i+1}=\mu b_i + m/q$, where $\mu \coloneqq (q-1)/q$.
     Note that any sequence $\{s_i\}_{i \in \mathbb{N}}$ satisfying the recurrence relation $s_{i+1} = \alpha s_i + \beta$ with $\alpha \neq 1$ and $s_1 = 0$ has a closed form given by $s_{k+1} = \beta (1 - \alpha^k)/(1 -\alpha)$. Hence, it follows that 
     $$b_{w'}=\frac{m}{q}\cdot \frac{1-{\mu}^{w'-1}}{1-\mu} = m \bigl(1 - \mu^{w' - 1}\bigr).$$ 
     
     As $b_{w'} \leq m - 8$ by definition of $w'$, we have that $m \mu^{w' -1} \geq 8$, or equivalently, $(1/\mu)^{w' - 1} \leq m/8$. By taking natural logarithm and solving for $w'$, we obtain the inequality 
     \begin{eqnarray*}
     w'-1 &\leq& \frac{\ln(m/8)}{\ln(1/\mu)} = \frac{\ln (m/8)}{\ln\bigl(1 + 1/(q-1)\bigr)} \leq \frac{1+1/(q-1)}{1/(q-1)}\cdot \ln( m/8)\\
     &=& q \cdot \ln(2) \log(m/8)= 2\ln(2)\log^2(m)-2\ln(2)\log(8)\log(m)\\[0.1em]
     &\leq& 2 \ln(2) \log^2 (m) - 2,    
     \end{eqnarray*}
    where, the second inequality uses the fact that $\ln(1+x) \geq x/(1+x)$ for all $x > -1$. Thus, along with Claim~\ref{clm:wwprime_reln}, we obtain the upper bound $w\leq 2\ln(2)\log^2(m)  -1 $.
    The proof of Theorem~\ref{thm:main} now follows by observing that for sufficiently large~$n$, we have $$N \leq m+2w\log(m) \leq  n-4\ln(2)\log^3(n) +9 + 4\ln(2)\log^3(m)-2\log(m) \leq n.$$\end{proof}

\section{Concluding Remarks} 

In this paper, we proved that the asymptotic size of the set of clique counts~$\Gamma(n)$ is at least $2^{n-4\ln(2)\log^3(n)}$. However, this improvement does not fully resolve the problem. More precisely, our new lower bound and the existing upper bound together imply that $$n - 4 \ln(2) \log^3(n) \leq \log(|\Gamma(n)|) \leq n - 0.4 \log(n); $$ and it is still an open problem to close this gap and determine the exact asymptotics of the size of $\Gamma(n)$. The most intriguing question is what the power of $\log(n)$ should be here --- it is somewhere between $1$ and $3$, and we do not have a good guess as to where the truth lies. 

Beyond cliques, similar counting problems have also been explored for spanning trees and the set of cycle lengths. Sedl\'{a}\v{c}ek~\cite{Spanning_tree_orig} first explored the number of distinct spanning tree counts achievable by $n$-vertex graphs and proved some initial bounds. This was later improved by several authors. We refer the interested reader to~\cite{Spanning_tree_Alon} for a comprehensive overview of the subsequent advancements and the current best-known bounds.  

\paragraph{Acknowledgment.} The work leading up to this paper began at the Staycation Workshop hosted by LSE in September 2025. We are grateful to Leo Versteegen for organising the event.

\paragraph{Statement of AI use.} No AI tools were used to prove the results in this paper.

\bibliographystyle{siam}
\bibliography{counting_cliques}

\end{document}